\newtheorem{claim}{Claim}
\newtheorem{fact}{Fact}
\newcommand{\Seq}[1]{( #1 )}
\newcommand\N{\mathbf{N}}
\newcommand\op{*}
\newcommand\var{\boldsymbol{1}}
\newcommand{\Th}{{}^{\textrm{th}}}
\title[no idempotent means]{Idempotent means on \\ free binary systems do not exist}
\keywords{amenability, binary system, Ellis's Lemma, idempotent mean, Hindman's Theorem, magma, nonassociative,
Thompson's group}
\subjclass[2010]{Primary: 43A07; Secondary: 05C55}
\thanks{
The research represented in this article was funded in part by 
NSF grant DMS--1600635.
}
\author{Justin Tatch Moore}
\address{Department of Mathematics \\ Cornell University\\
Ithaca, NY 14853--4201 \\ USA}
\email{{\tt justin@math.cornell.edu}}
\begin{document}

\begin{abstract}
Free binary systems are shown to not admit idempotent means.
This refutes a conjecture of the author.
It is also shown that the extension of Hindman's theorem to 
nonassociative binary systems formulated and conjectured by the author is false.
\end{abstract}

\maketitle

\section{Introduction}

Recall that a \emph{binary system}, or \emph{magma}, is a nonempty set equipped with a binary operation.
If \((S,*)\) is a binary system, then \(*\) can be extended to the set \(M(S)\) of \emph{means} on 
\(S\) by:
\[
\mu * \nu (f) = \int \left( \int f(s * t)\  d \nu (t) \right) d\mu (s).
\]
Here a \emph{mean} on a set \(S\) is a \(\mu \in \ell^\infty (S)^*\) such that \(\mu (\chi_S) = 1\) and \(\mu(f) \geq 0\) whenever \(f \geq 0\).
Such a \(\mu\) is completely determined by its values on the characteristic functions
(via integration)
and in this way we will identify means with finitely additive probability measures.
If \(\mu\) takes only values in \(\{0,1\}\) on characteristic functions,
then we say that \(\mu\) is an \emph{ultrafilter}.

Answering a question of Galvin, Glazer noted that if \(*\) is an associative operation on \(S\),
then Ellis's Lemma \cite{Ellis_lem} implies there is an idempotent ultrafilter on \(S\).
Galvin had already noted that the existence of idempotent ultrafilters on \((\N,+)\) could be used to give a short proof of Hindman's Theorem \cite{Hindman_thm}:
\begin{quote}
\emph{
If \(\N = \bigcup_{i < d} K_i\), then there is an infinite set \(H \subseteq \N\) and an 
\(i < d\) such that whenever \(F\) is a nonempty finite subset of \(H\),
the sum of \(F\) is in \(K_i\).}
\end{quote}
In fact idempotent ultrafilters on semigroups have found extensive applications in Ramsey theory
--- see \cite{alg_betaN} for a more detailed account of both the history and the applications.

A possible extension of this theory to nonassociative binary systems
was considered by the author in \cite{Hind_Ellis_F}.
It was noted there that idempotent ultrafilters do not exist on free binary systems.
On the other hand, it was shown that the existence of an idempotent
mean on any free binary system implies Richard Thomspon's group \(F\) is amenable.
In fact it was demonstrated there that the existence of such idempotent means
implies a version of Hindman's theorem for the free binary system on one generator
which in turn implies that \(F\) is amenable.  

The amenability problem for \(F\) is a long standing problem in group theory first considered by
Richard Thompson \cite{thompson_letter}
but rediscovered and first popularized by Ross Geoghegan;
see \cite[p. 549]{comb_grp_top}. 
It is arguably the most notorious problem concerning the amenability of a specific group.
I previously (and incorrectly) claimed to have proved the existence of an idempotent mean on a
free binary system \cite{amen_F}.
In this article I will prove that free binary systems does not support idempotent means,
refuting Conjecture 1.4 of \cite{Hind_Ellis_F}.
I will also refute Conjecture 1.3 of \cite{Hind_Ellis_F}, which was an extension of Hindman's theorem to nonassociative binary systems.
The question of \(F\)'s amenability remains open.

While this article is self contained, the reader will find additional motivation in
\cite{Hind_Ellis_F} (and \cite{amen_F}).
Throughout this paper, counting will start with \(0\).
The variables \(i,j,k,m,n,p\) will always implicitly
be taken to range over the nonnegative integers.
For instance, the \(n\)-tuple \(\Seq{a_0,\ldots,a_{n-1}}\)
will be denoted \(\Seq{a_k \mid k < n}\).

\section{Free binary systems do not support idempotent measures}

Let \((S,*)\) be a free binary system generated by \(I\), fixed for the remainder of the section.
Notice that the binary operation \(*\) is an injection from \(S \times S \to S \setminus I\);
this is in fact equivalent to the freeness of \((S,*)\).
Define \(\#:(S,*) \to (\N,+)\) to be the homomorphism which maps every element of \(I\) to \(1\).
Thus \(\#(s)\) is the size of the (unique) nonassociative product of generators used to produce \(s\).
In particular, if \(s = a * b\), then \(\#(a), \#(b) < \#(s) = \#(a) + \#(b)\).
Set \(S_n :=\{s \in S \mid \#(s) = n\}\).

Define membership to sets \(Z \subseteq S\) and \(T_p \subseteq S\) recursively on \(\#(\cdot)\)
as follows:
\begin{itemize}

\item \(T_0 = S\) and \(T_{p+1} = (S \setminus Z) * T_p\);

\item \(s \in Z\) if and only if \(s = a * b\) where \(b \in T_{\#(a)}\).

\end{itemize}
Observe that \(Z = \bigcup_p S_p * T_p\).

Recall that if \(\mu\) and \(\nu\) are means on \(S\),
then
\[
\mu * \nu (f) = \int \left( \int f(s * t)\  d \nu (t) \right) d\mu (s)
\]
defines a mean on \(S\).
We note the following facts.

\begin{fact} \label{prod_fact}
If \(\mu\) and \(\nu\) are means on \(S\) and \(A,B \subseteq S\),
then \(\mu * \nu(A *B) = \mu(A) \nu(B)\).
\end{fact}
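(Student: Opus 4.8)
The plan is to reduce the whole statement to the pointwise identity
\[
\chi_{A*B}(s * t) = \chi_A(s)\,\chi_B(t) \qquad (s,t \in S)
\]
and then simply unwind the definition of \(\mu * \nu\).

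First I would prove this pointwise identity, and this is where the hypothesis of freeness enters. The element \(s * t\) belongs to \(A * B\) exactly when there exist \(a \in A\) and \(b \in B\) with \(a * b = s * t\). Since \((S,*)\) is free, \(*\) is injective on \(S \times S\) (as noted at the start of the section), so \(a * b = s * t\) forces \(a = s\) and \(b = t\). Hence \(s * t \in A * B\) if and only if \(s \in A\) and \(t \in B\), which is precisely the displayed identity.

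Next I would substitute \(f = \chi_{A*B}\) into the defining formula for the product mean and use linearity twice. The inner integral becomes \(\int \chi_A(s)\,\chi_B(t)\, d\nu(t)\); because \(\chi_A(s)\) does not depend on \(t\), linearity of \(\nu\) (equivalently, finite additivity of the corresponding measure) pulls it out to give \(\chi_A(s)\,\nu(B)\). Thus the function being integrated against \(\mu\) is the constant \(\nu(B)\) times \(\chi_A\), and integrating against \(\mu\) and again applying linearity yields \(\nu(B)\,\mu(A)\), which is the claim.

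I do not expect a genuine obstacle; the only point requiring a moment's care is the bookkeeping for finitely additive measures — one must check that the inner integral really is a bona fide element of \(\ell^\infty(S)\) before the outer integral is applied — but here that inner integral is literally the constant function \(\nu(B)\,\chi_A\), so there is nothing to verify. In effect the entire content of the fact is that freeness makes \(*\) injective, and injectivity is exactly what turns the characteristic function of a product set into a product of characteristic functions.
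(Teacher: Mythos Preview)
Your argument is correct: the injectivity of \(*\) (noted in the paper as equivalent to freeness) gives \(\chi_{A*B}(s*t)=\chi_A(s)\chi_B(t)\), and the rest is just the definition of \(\mu*\nu\). The paper states this as a fact without proof, so there is nothing further to compare.
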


\begin{fact} \label{avg_fact}
If \(X \subseteq S\) and for some \(c\) and \(\mu\)-a.e. \(s \in S\),
\(
\nu(\{t \in S \mid s * t \in X\}) = c
\)
then \(\mu * \nu(X) = c\).
\end{fact}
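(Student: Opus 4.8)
The plan is to unwind the definition of \(\mu * \nu\) and observe that the inner integral is just a fixed bounded function on \(S\). Concretely, define \(g \colon S \to [0,1]\) by
\[
g(s) = \int \chi_X(s * t)\, d\nu(t) = \nu(\{t \in S \mid s * t \in X\}).
\]
Since \(\nu\) is a mean, \(0 \le g(s) \le 1\) for every \(s\), so \(g \in \ell^\infty(S)\) and \(\mu(g)\) is defined. Applying the definition of the extension of \(*\) to means with \(f = \chi_X\) gives
\[
\mu * \nu(X) = \mu * \nu(\chi_X) = \int g(s)\, d\mu(s) = \mu(g),
\]
so it remains only to show \(\mu(g) = c\).

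Here I would invoke the hypothesis, namely that \(N := \{s \in S \mid g(s) \ne c\}\) satisfies \(\mu(N) = 0\). Write \(g = c\,\chi_{S \setminus N} + g\,\chi_N\). By linearity of \(\mu\) it suffices to check that \(\mu(g\,\chi_N) = 0\) and \(\mu(\chi_{S \setminus N}) = 1\). The first follows from \(0 \le g\,\chi_N \le \chi_N\) together with the positivity and monotonicity of \(\mu\), which yield \(0 \le \mu(g\,\chi_N) \le \mu(N) = 0\). The second follows from \(\mu(\chi_S) = 1\) and finite additivity: \(\mu(\chi_{S \setminus N}) = \mu(\chi_S) - \mu(\chi_N) = 1 - 0 = 1\). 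Combining these gives \(\mu(g) = c \cdot 1 + 0 = c\), as desired.

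The argument is entirely routine, so there is no real obstacle; the only point that needs a moment's care is that \(\mu\) is assumed merely to be finitely additive, so one cannot appeal to a measure-theoretic ``modify on a null set'' principle and must instead argue directly from positivity, monotonicity, and finite additivity, exactly as above. I expect this small bit of bookkeeping, rather than any genuine difficulty, to be the closest thing to a ``hard part.''
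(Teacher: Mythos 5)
Your proof is correct, and the paper in fact states this as a \emph{Fact} without any proof at all, so there is nothing to compare against: your unwinding of the definition of \(\mu * \nu\) with \(f = \chi_X\), followed by the splitting \(g = c\,\chi_{S\setminus N} + g\,\chi_N\) and the positivity/finite-additivity estimate \(0 \le \mu(g\,\chi_N) \le \mu(N) = 0\), is exactly the routine verification the author is implicitly relying on. Your remark that one must argue via positivity and finite additivity rather than a countably additive ``null set'' principle is the right point of care, and you handle it correctly.
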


Now suppose for contradiction that \(\mu\) is an idempotent mean on \(S\) and set \(r := \mu(E)\).
By Fact \ref{prod_fact},
\[
\mu(S_1) = \mu (I) = \mu(S \setminus (S*S)) = 1 - \mu(S)\mu(S) = 0.
\]
Fact \ref{prod_fact} inductively implies that \(\mu(T_n) = (1-r)^n\) and that
\[
\mu(S_n) = \mu (\bigcup_{i+j=n} S_i * S_j) = \sum_{i+j=n} \mu(S_i)\mu(S_j) = 0
\]
for all \(n\).
Moreover \(\mu(S_n * S) = 0\) for all \(n\).

If \(r > 0\), then let \(n\) be sufficiently large that \((1-r)^n < r\).
Observe that
\[
Z = \bigcup_{k < n} S_k * T_k \cup \bigcup_{k = n}^\infty S_k * T_k \subseteq 
\bigcup_{k < n} S_k * T_k \cup \bigcup_{k = n}^\infty S_k * T_n.
\]
Since \(\mu ( \bigcup_{k < n} S_k * T_k ) = 0\) and since
\(\mu (\bigcup_{k = n}^\infty S_k * T_n) = \mu(T_n) = (1-r)^n\),
we have \(r = \mu(Z) \leq (1-r)^n < r\), contrary to our choice of \(n\).

If \(r = 0\), then for every \(s \in S\),
\[
\mu(\{t \in S \mid s * t \in Z\} ) = \mu( T_{\#(s)} ) = (1-r)^{\#(s)} = 1.
\]
However by Fact \ref{avg_fact}, \(\mu (Z) = \mu*\mu(Z) = 1 \ne 0 = \mu (Z)\), which is also a contradiction.
This refutes Conjecture 1.4 of \cite{Hind_Ellis_F} which asserts that whenever \((S,*)\) is a binary system,
every compact convex subsystem of \(M(S)\) contains an idempotent.

\section[nonassociative]{The nonassociative analog of Hindman's Theorem is false}

In this section we will refute Conjecture 1.3 of \cite{Hind_Ellis_F}, which was suggested
as a nonassociative analog of Hindman's Theorem.
For the duration of the section, we will let
\((S,*)\) denote the free binary system on one generator \(\var\).
The sets \(S_n\), \(T_n\), and \(Z\) are defined as in the previous section.
Note that in the present context \(S_n\) is finite for each \(n\).
If \(\mu \in \bigcup_p M(S_p)\), we will write \(\#(\mu)\) to denote the unique
\(p\) such that \(\mu \in M(S_p)\).
Conjecture 1.3 of \cite{Hind_Ellis_F} can now be stated as follows:
\begin{quote}
\emph{If \(c:S \to [0,1]\)
and \(\epsilon > 0\), then there is an \(r \in [0,1]\) and an
increasing sequence \(\Seq{\mu_i \mid i < \infty}\) of elements of
\(\bigcup_p M(S_p)\) such that whenever \(s\) is in \(S_n\) and
\(\Seq{i_k \mid k < n}\) is admissible for \(s\):} 
\[
|c(s\Seq{\mu_{i_k} \mid k < n}) - r | < \epsilon
\]
\end{quote}
Here \(s\Seq{\mu_{i_k} \mid k < n}\) is the result of taking the unique term used to generate \(s\)
from \(\var\) and \(\op\), replacing the \(k\Th\) occurrence of \(\var\) with \(\mu_{i_k}\), and evaluating
the resulting expression in \(M(S)\).
Notice that if \(\#(\mu_{i_k}) = p_k\), then \(\#(s\Seq{\mu_{i_k} \mid k < n}) = \sum_{k < n} p_k\).

We will work the following equivalent recursive definition of \emph{admissible}:
\begin{itemize}

\item \(\var\) is admissible for any sequence of positive integers of length \(1\);

\item if \(s \in S_m\) and \(t \in S_n\), then
\(\Seq{i_k \mid k < m+n}\) is admissible for \(s \op t\) if and only if
\(\Seq{i_k \mid k < m}\) is admissible for \(s\) and 
\(\Seq{i_k - m \mid m \leq k < n}\) is admissible for \(t\).

\end{itemize}
Thus \(\Seq{i_k \mid k < m}\) is admissible for any element of $S_m$
provided that $m \leq i_0$.

Returning to Conjecture 1.3 of \cite{Hind_Ellis_F}, we will show that the conclusion of the conjecture fails
if \(c\) is defined by \(c(\mu) = \mu(Z)\) and \(0 < \epsilon < 1/2\).
Toward this end, let \(\Seq{\mu_i \mid i < \infty}\) be given and let \(r\) be any accumulation point
of the sequence \(\Seq{\mu_i (Z) \mid i < \infty}\).
We will be finished once we prove the following three claims.

\begin{claim} \label{near0}
For every \(m\) and \(\epsilon > 0\), there is an \(s \in S\) and 
\(\Seq{i_k \mid k < n}\) such that:
\begin{enumerate}

\item \(\Seq{i_k - m \mid k < n}\) is admissible for \(s\);

\item \(s\Seq{\mu_{i_k} \mid k < n}(Z) < \epsilon\).
 
\end{enumerate}
\end{claim}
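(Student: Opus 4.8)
The plan is to evaluate $s\Seq{\mu_{i_k} \mid k < n}(Z)$ directly for a well-chosen $s$, on the basis of two structural observations about $Z$ and the sets $T_p$. The first is a \emph{peeling identity}. Suppose $s = s_1 \op s_2$ and $\Seq{i_k \mid k < n}$ is any index sequence; put $\mu' := s_1\Seq{\mu_{i_k} \mid k < \#(s_1)}$ and $\mu'' := s_2\Seq{\mu_{i_k} \mid \#(s_1) \le k < n}$, so that $s\Seq{\mu_{i_k} \mid k < n} = \mu' \op \mu''$ and $\mu'$ is concentrated on $S_q$ with $q := \sum_{k < \#(s_1)}\#(\mu_{i_k})$ (because $\#$ is a homomorphism). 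For every $a \in S$, freeness of $\op$ together with the recursive definition of $Z$ gives $\{t \in S \mid a \op t \in Z\} = T_{\#(a)}$, so for $\mu'$-almost every $a$ this set equals $T_q$; hence Fact~\ref{avg_fact} yields
\[
s\Seq{\mu_{i_k} \mid k < n}(Z) = (\mu' \op \mu'')(Z) = \mu''(T_q).
\]
The second observation is a \emph{degree bound}: an immediate induction (using $T_0 = S$, $T_{p+1} = (S \setminus Z) \op T_p$, and the fact that $\#$ of a product is the sum of the $\#$-values of its factors) shows that every element of $T_p$ has $\#$-value at least $p+1$. Consequently $T_p$ is disjoint from $S_j$ for all $j \le p$, so any mean concentrated on $S_j$ with $j \le p$ assigns $T_p$ the value $0$; in particular $\mu_i(T_{\#(\mu_i)}) = 0$ for every $i$. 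I shall also use the fact, immediate from the recursive definition, that a sequence is admissible for any element of $S_n$ provided its $k$-th entry is at least $k+1$ for each $k < n$.

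Granting these, Claim~\ref{near0} follows from a trivial choice. Given $m$ and $\epsilon > 0$, I would take $s := \var \op \var \in S_2$ and $\Seq{i_0, i_1} := \Seq{m+2, m+2}$. Then $\Seq{i_0 - m, i_1 - m} = \Seq{2, 2}$ is admissible for $\var \op \var$ (its entries, $2$ and $2$, are at least $1$ and $2$ respectively), which is condition (1). For condition (2), $s\Seq{\mu_{i_0}, \mu_{i_1}} = \mu_{m+2} \op \mu_{m+2}$, so applying the peeling identity with $\mu' = \mu'' = \mu_{m+2}$ and $q = \#(\mu_{m+2})$, followed by the degree bound, gives $s\Seq{\mu_{i_0}, \mu_{i_1}}(Z) = \mu_{m+2}(T_{\#(\mu_{m+2})}) = 0 < \epsilon$. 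This actually produces $Z$-measure exactly $0$; the parameters $m$ and $\epsilon$ enter only through the admissibility bookkeeping, and the accumulation point $r$ is not needed for this claim.

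The one point that genuinely needs care is whether the notion of admissibility in play permits the repeated index in $\Seq{m+2,m+2}$; as stated, the recursion imposes only numerical lower bounds on the entries and never injectivity or monotonicity, so it does. Should one insist instead on strictly increasing index sequences, I would argue by cases. If $\liminf_i \mu_i(Z) = 0$, take $s = \var$ and a single index $i_0 \ge m+1$ large enough that $\mu_{i_0}(Z) < \epsilon$. If $\liminf_i \mu_i(Z) = 2\delta > 0$ and $i \mapsto \#(\mu_i)$ is bounded by $B$, take $s$ of the form $s_1 \op \var$ in $S_{B+2}$ with leaf indices $m+1 < m+2 < \cdots < m+B+2$; then $\sum_{k < B+1}\#(\mu_{i_k}) \ge B+1 > B \ge \#(\mu_{i_{B+1}})$, so the peeling identity and degree bound again give measure $0$. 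If $\liminf_i \mu_i(Z) = 2\delta > 0$ and $i \mapsto \#(\mu_i)$ is unbounded, choose $i_0 \ge m+1$ with $P := \#(\mu_{i_0})$ large enough that $(1-\delta)^P < \epsilon$ and large enough that $\mu_j(Z) > \delta$ for all $j > i_0$, let $s$ be the right comb on $P+2$ leaves with leaf indices $i_0 < i_0+1 < \cdots < i_0+P+1$, and iterate the peeling identity $P$ times to obtain $s\Seq{\mu_{i_k} \mid k < P+2}(Z) = \prod_{l=1}^{P}(1 - \mu_{i_0+l}(Z)) \le (1-\delta)^P < \epsilon$. The main obstacle is therefore not any computation but merely confirming that whichever index sequence one selects meets the admissibility requirement $i_k - m \ge k+1$, which is routine for all of the choices above.
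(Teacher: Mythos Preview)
Your argument is correct, and the key observation you make---the \emph{degree bound} that every element of \(T_p\) has \(\#\)-value at least \(p+1\), so that any mean concentrated on \(S_j\) with \(j\le p\) gives \(T_p\) measure zero---is not used in the paper's proof at all. It lets you obtain \(Z\)-measure exactly \(0\) from \(s=\var\op\var\) with a repeated index, bypassing the accumulation point \(r\) entirely. The paper instead cases on whether \(r=0\) or \(r>0\); in the latter case it builds \(s=u\op v\) where \(u\) is a right comb of length \(l\) (chosen so that \((1-r)^l<\epsilon\)) whose sole purpose is to make \(p:=\#(u\Seq{\mu_{i_k}})\ge l\), and then \(v\) is a right comb on \(p+1\) leaves whose indices are picked near the accumulation point so that \(v\Seq{\cdots}(T_p)=\prod(1-\mu_{i_k}(Z))<\epsilon\). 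Your main route is therefore genuinely shorter; what the paper's construction buys is that it never needs to repeat an index, and it works uniformly without splitting on whether \(\#(\mu_i)\) is bounded.

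You were right to flag the repeated-index issue: the paper's remark that a sequence is admissible for any element of \(S_m\) ``provided that \(m\le i_0\)'' only follows from the stated recursion if the sequence is already nondecreasing, so an implicit monotonicity convention is in play. Your fallback by cases is correct. Case~2 (bounded \(\#\)) again exploits your degree bound to force measure \(0\); Case~3 is essentially the paper's \(r>0\) argument, except you use unboundedness of \(\#(\mu_i)\) to get a single leaf \(\mu_{i_0}\) with large \(P=\#(\mu_{i_0})\) rather than a comb \(u\). One small point to make explicit in Case~3: the ``iteration'' you invoke is not the \(Z\)-peeling identity itself but its \(T\)-analogue \((\nu\op\nu')(T_{p+1})=(1-\nu(Z))\,\nu'(T_p)\), which follows from \(T_{p+1}=(S\setminus Z)\op T_p\) and freeness just as your \(Z\)-identity does; stating it would close the only visible seam.
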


\begin{proof}
Let \(m\) and \(\epsilon > 0\) be given.
If \(r=0\), then we can take \(s = \var\) and choose \(i_0 > m\) so that
\(\mu_{i_0}(Z) < \epsilon\).
Therefore suppose that \(r > 0\) and let \(l\) be sufficiently large that \((1-r)^l < \epsilon\).
If \(k < l\), define \(i_k = m + k + 1\) and let \(u := \var \op (\cdots \var *(\var * \var) \cdots)\) be the right associated product
of \(l\) many \(\var\)'s.
Thus \(\Seq{i_k - m \mid k < l}\) is admissible for \(u\).
Set \(p:= \#(u\Seq{\mu_{i_k} \mid k < l})\), \(n:=l + p + 1\), and
let \(v\) be the right associated product of \(p+1\) many \(\var\)'s.
Since \(p \geq l\), by choice of \(r\) it is possible to pick an increasing sequence
\(\Seq{i_k \mid l \leq k < n}\) of indices such that
\(i_l > m+l\) and:
\[
\prod_{k=l}^{l + p + 1} (1 -\mu_{i_k}(Z)) < \epsilon
\]
This implies in particular that \(v\Seq{\mu_{i_k} \mid l \leq k < n}(T_p) < \epsilon\).
Set \(s := u \op v\) and
observe that \(\Seq{i_k - m- l \mid l \leq k < n}\) is admissible for \(v\) and thus
\(\Seq{i_k - m \mid k < n}\) is admissible for \(s\).
Since \(u\Seq{\mu_{i_k} \mid k < l}(S_p) = 1\) and since
\(Z \cap (S_p * S) = S_p * T_p\), we have that
\(s\Seq{\mu_{i_k} \mid k < n}(Z) < \epsilon\) as desired.
\end{proof}

\begin{claim} \label{T_claim}
For every \(m\) and \(p\) and every \(\epsilon > 0\),
there is an \(s \in S\) and \(\Seq{i_k \mid k < n}\) such that:
\begin{enumerate}

\item \(\Seq{i_k - m \mid k < n}\) is admissible for \(s\);

\item \(s\Seq{\mu_{i_k}  \mid k < n}(T_p) > 1-\epsilon\).
 
\end{enumerate}
\end{claim}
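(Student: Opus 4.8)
The plan is to prove Claim~\ref{T_claim} by induction on $p$, with Claim~\ref{near0} supplying the content of the inductive step. The base case $p = 0$ is immediate: since $T_0 = S$ and every mean assigns $S$ measure $1$, it suffices to take $s = \var$ together with the one-term sequence $\Seq{m+1}$, for then $\Seq{(m+1) - m} = \Seq{1}$ is admissible for $\var$ and $\var\Seq{\mu_{m+1}}(T_0) = \mu_{m+1}(S) = 1 > 1 - \epsilon$.

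For the inductive step, fix $m$ and $\epsilon > 0$ and assume the claim for $p$; I want it for $p+1$. The structural fact to exploit is that $T_{p+1} = (S \setminus Z) \op T_p$, so that by Fact~\ref{prod_fact} a product mean $\sigma \op \tau$ satisfies $(\sigma \op \tau)(T_{p+1}) = \sigma(S\setminus Z)\,\tau(T_p) = (1 - \sigma(Z))\,\tau(T_p)$. Set $\delta := \epsilon/2$. First apply Claim~\ref{near0} to $m$ and $\delta$: this produces $u \in S_l$ and a sequence $\Seq{i_k \mid k < l}$ with $\Seq{i_k - m \mid k < l}$ admissible for $u$ such that $\sigma := u\Seq{\mu_{i_k} \mid k < l}$ has $\sigma(Z) < \delta$. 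Next apply the inductive hypothesis to $m' := m + l$, to $p$, and to $\delta$: this produces $t \in S_{n'}$ and a sequence $\Seq{j_k \mid k < n'}$ with $\Seq{j_k - m' \mid k < n'}$ admissible for $t$ such that $\tau := t\Seq{\mu_{j_k} \mid k < n'}$ has $\tau(T_p) > 1 - \delta$.

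Now take $s := u \op t \in S_{l+n'}$ and let $\Seq{\ell_k \mid k < l+n'}$ be the concatenation given by $\ell_k := i_k$ for $k < l$ and $\ell_k := j_{k-l}$ for $l \le k < l + n'$. By the recursive definition of admissibility, $\Seq{\ell_k - m \mid k < l+n'}$ is admissible for $u \op t$: its initial segment of length $l$ is $\Seq{i_k - m \mid k < l}$, admissible for $u$, while its terminal segment, shifted down by the further $\#(u) = l$, is $\Seq{j_k - m - l \mid k < n'} = \Seq{j_k - m' \mid k < n'}$, admissible for $t$. Since the first $l$ occurrences of $\var$ in the term generating $u \op t$ come from $u$ and the remaining $n'$ from $t$, substitution gives $s\Seq{\mu_{\ell_k} \mid k < l+n'} = \sigma \op \tau$, so by Fact~\ref{prod_fact} and $T_{p+1} = (S\setminus Z)\op T_p$,
\[
s\Seq{\mu_{\ell_k} \mid k < l+n'}(T_{p+1}) = (1 - \sigma(Z))\,\tau(T_p) > (1-\delta)^2 > 1 - \epsilon,
\]
completing the induction.

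The one step with any content beyond bookkeeping is the concatenation: I expect the main (minor) subtlety to be making sure the admissible sequence delivered for $t$ lines up with what is needed inside $u \op t$, which is exactly why I set $m' = m + l$ — the shift appearing in the recursive admissibility clause for $u \op t$ is precisely $\#(u) = l$. (Alternatively one could take $m'$ arbitrarily large and appeal to the evident fact that admissibility is preserved under increasing the entries of an index sequence.) Everything else reduces to Claim~\ref{near0}, Fact~\ref{prod_fact}, and the recursion defining $T$.
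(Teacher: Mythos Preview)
Your proof is correct and follows essentially the same approach as the paper: induction on \(p\), with the inductive step obtained by combining Claim~\ref{near0} (for the left factor) with the inductive hypothesis applied at the shifted parameter \(m' = m + l\) (for the right factor), and then invoking Fact~\ref{prod_fact} together with \(T_{p+1} = (S \setminus Z) \op T_p\). The only cosmetic differences are that the paper leaves \(\delta\) implicit via \((1-\delta)^2 > 1-\epsilon\) rather than setting \(\delta = \epsilon/2\), and is somewhat terser about the admissibility bookkeeping you spell out.
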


\begin{proof}
The proof is by induction on \(p\).
The base case is trivial since \(T_0 = S\).
Suppose the claim holds for a given \(p\) and
let \(m\) and \(\epsilon > 0\) be given.
Fix \(\delta > 0\) such that \((1-\delta)^2 > 1-\epsilon\).
By Claim \ref{near0}, there are \(u \in S\) and \(\Seq{i_k \mid k < l}\) such that
\[
u\Seq{\mu_{i_k} \mid k < l}(Z) < \delta
\]
and \(\Seq{i_k - m  \mid k < l}\) is admissible for \(u\).
By our inductive hypothesis, there exist \(v\) and \(\Seq{i_k \mid l \leq k < n}\) such
that
\[
v\Seq{\mu_{i_k} \mid l \leq k < n}(T_p) > 1-\delta
\]
and
\(\Seq{i_k - m - l  \mid l \leq k < n}\) is admissible for \(v\).
It follows that \(s := u \op v\) and \(\Seq{i_k \mid k < n}\) satisfy the conclusion of
the claim.
\end{proof}

\begin{claim} \label{near1}
For every \(\epsilon > 0\), there is an \(s \in S\) and \(\Seq{i_k \mid k < n}\) which is admissible
for \(s\) such that \(s\Seq{\mu_{i_k} \mid k < n}(Z) > 1-\epsilon\).
\end{claim}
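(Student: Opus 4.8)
The plan is to derive Claim \ref{near1} directly from Claim \ref{T_claim} and Fact \ref{prod_fact}, exploiting --- as in the proof of Claim \ref{near0} --- that $S_p \op T_p \subseteq Z$ for every $p$. I will look for $s$ of the form $\var \op v$. Substituting a mean $\mu_i$ for the single leading occurrence of $\var$ reproduces $\mu_i$ itself, and since $\mu_i$ lies in $\bigcup_p M(S_p)$ it is automatically concentrated on the level $S_{\#(\mu_i)}$. So if $\mu_{i_0}$ fills the leading slot, $p := \#(\mu_{i_0})$, and $v$ together with the indices for its slots is chosen so that the $v$-part evaluates to a mean $\beta$ with $\beta(T_p) > 1 - \epsilon$, then Fact \ref{prod_fact} yields
\[
s\Seq{\mu_{i_k} \mid k < n}(Z) \;\geq\; (\mu_{i_0} \op \beta)(S_p \op T_p) \;=\; \mu_{i_0}(S_p)\, \beta(T_p) \;=\; \beta(T_p) \;>\; 1 - \epsilon,
\]
which is exactly the conclusion sought.

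The steps, in order, are as follows. First I would fix $i_0 = 1$ and set $p := \#(\mu_{i_0})$. Next I would apply Claim \ref{T_claim} with the parameter $m$ equal to $1$, with this $p$, and with the given $\epsilon$, obtaining $v \in S$ and a sequence $\Seq{j_k \mid k < n'}$ such that $\Seq{j_k - 1 \mid k < n'}$ is admissible for $v$ and $v\Seq{\mu_{j_k} \mid k < n'}(T_p) > 1 - \epsilon$. Then I would set $s := \var \op v$, put $n := n' + 1$, and let $\Seq{i_k \mid k < n}$ be $i_0$ followed by $\Seq{j_k \mid k < n'}$. Unwinding the recursive definition of admissibility, $\Seq{i_k \mid k < n}$ is admissible for $\var \op v$: its length-$1$ prefix $\Seq{i_0}$ is admissible for $\var$, while the remainder, shifted down by $1$, is precisely $\Seq{j_k - 1 \mid k < n'}$, which is admissible for $v$. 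Finally, with $\beta := v\Seq{\mu_{j_k} \mid k < n'}$, the displayed inequality completes the proof.

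I expect the only delicate point to be the index bookkeeping, and specifically checking that $\Seq{i_k \mid k < n}$ is increasing. Inspection of the proofs of Claims \ref{near0} and \ref{T_claim} shows that when $m = 1$ every index they produce exceeds $1$, so prepending $i_0 = 1$ keeps the sequence increasing; beyond this, no quantitative input is needed that is not already contained in Claims \ref{near0} and \ref{T_claim}. Once Claim \ref{near1} is established, the refutation is complete: applying Claim \ref{near0} with $m = 0$ and Claim \ref{near1}, each with a sufficiently small tolerance, produces admissible pairs $(s, \Seq{i_k \mid k < n})$ on which $c(\mu) = \mu(Z)$ takes a value arbitrarily close to $0$ and one arbitrarily close to $1$, so no single $r$ can lie within $\epsilon$ of both when $\epsilon < 1/2$, contradicting the conclusion of Conjecture 1.3 of \cite{Hind_Ellis_F}.
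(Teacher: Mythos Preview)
Your proposal is correct and follows essentially the same route as the paper: choose a leading index, set \(p\) to be the level of the corresponding \(\mu\), invoke Claim~\ref{T_claim} with \(m=1\) to obtain \(v\) with \(v\)-substitution mass \(>1-\epsilon\) on \(T_p\), and take \(s=\var\op v\), concluding via \(S_p\op T_p\subseteq Z\) and Fact~\ref{prod_fact}. The paper uses \(i_0=0\) and \(p=\#(\mu_0)\) where you use \(i_0=1\) and \(p=\#(\mu_1)\); this is an immaterial difference (and your choice is arguably cleaner given the ``positive integer'' clause in the base case of admissibility). Your added remark about the index sequence being increasing is not needed for admissibility as defined here, but it does no harm.
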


\begin{proof}
Setting \(p :=\#(\mu_0)\), 
by Claim \ref{T_claim} there is a sequence \(\Seq{i_k \mid 0 < k < n}\) and a \(t \in S\) such that
\(\Seq{i_k -1 \mid 0 < k < n}\) is admissible for \(t\) and
\[
t\Seq{\mu_{i_k} \mid 1 \leq k < n}(T_p) > 1-\epsilon.
\]
Setting \(i_0:=0\), \(s := \var \op t\) and \(\Seq{i_k \mid k < n}\) satisfy the conclusion of the claim.
\end{proof}

The desired contradiction to Conjecture 1.3 of \cite{Hind_Ellis_F} now follows from
Claims \ref{near0} and \ref{near1} with \(m=0\)
by noting that for any \(0 \leq r \leq 1\) and \(0 < \epsilon < 1/2\),
it is not possible for both \(0\) and \(1\) to be
in the interval \([r-\epsilon,r+\epsilon]\).

\def\Dbar{\leavevmode\lower.6ex\hbox to 0pt{\hskip-.23ex \accent"16\hss}D}

\end{document}